%
%
%
%
%

\documentclass[11pt]{amsart}
\usepackage{amsmath,amscd,amssymb,amsfonts,amsthm,hyperref}
\usepackage{tcolorbox}
\usepackage{tikz-cd}
\hypersetup{colorlinks,   linkcolor={red!50!black},    citecolor={blue!70!black},   urlcolor={blue!80!black}}

\textwidth15.8 cm
\oddsidemargin.4cm
\evensidemargin.4cm
%
\usepackage[cmtip,matrix,arrow]{xy}


\newtheorem{theorem}{Theorem}[section]

\theoremstyle{definition}    
\newtheorem{definition}[theorem]{Definition}
\theoremstyle{remark}

\newtheorem{remark}[theorem]{Remark}

\newtheorem{examples}[theorem]{Examples}
%

\newcommand\A{\mathcal{A}}

\newcommand{\W}{\mathcal{W}}

\renewcommand{\L}{\mathcal{L}}
\renewcommand{\O}{\mathcal{O}}
\newcommand{\T}{\mathbb{T}}

\newcommand{\ca}{\mathcal}

\newcommand{\R}{\mathbb{R}}

\newcommand{\Z}{\mathbb{Z}}


\newcommand{\on}{\operatorname}


\newcommand{\Mult}{  \on{Mult}}



\newcommand\qu{/\kern-.7ex/} 

\newcommand{\hra}{\hookrightarrow}

\renewcommand{\d}{{\mbox{d}}}

\newcommand{\f}{\frac}

\newcommand{\p}{\partial}

\newcommand{\eeq}{\end{eqnarray*}}
\newcommand{\beq}{\begin{eqnarray*}}

\newcommand{\wt}{\widetilde}
\newcommand{\mf}{\mathfrak}
\newcommand{\rra}{\rightrightarrows}
\newcommand{\sz}{\mathsf{s}}
\newcommand{\tz}{\mathsf{t}}

\setcounter{tocdepth}{1}
\newcommand{\I}{\mathcal{I}}
\renewcommand{\supset}{\supseteq}
\renewcommand{\subset}{\subseteq}

%
%

\begin{document}
\sloppy
\title{An introduction to weightings along submanifolds}
\author{Eckhard Meinrenken}

\maketitle


\section{Introduction}
The concept of \emph{weighting} on a manifold was introduced by Richard Melrose \cite{mel:cor} under the name of \emph{quasi-homogeneous structures}, and by Yiannis Loizides and myself in \cite{loi:wei} under the present name. Further developments may be found in \cite{beh:the,goo:won,hud:lin,hud:mul,loi:lie}.  

The principal idea of weighting is to give a coordinate-free meaning to the process of assigning weights to coordinate functions. 
Constructions of this type appear in many area of mathematics, such as in the classification of singularities \cite{arn:cri}, 
algebraic geometry (e.g., weighted blow-ups), anisotropic asymptotic expansions (e.g., \cite{choi:priv1}), or the theory of hypoelliptic operators. One of our original motivations was to obtain a better understanding of the `nonlinear' tangent groupoids
arising in this context of manifolds with Lie filtrations \cite{choi:tan,erp:tan,hig:eul}. 

These notes are a write-up of a talk given at the Ghent Geometric Analysis Seminar in 2023, and I thank the organizers of the seminar, 
Michael Ruzhansky and David Santiago Gomez Cobos, for the kind invitation. 


\section{Definition and examples}
A weighting of a manifold $M$ along a submanifold $N\subset M$ is a filtration on the algebra of functions, 
\begin{equation}\label{eq:sheaf}
C^\infty(M)=C^\infty(M)_{(0)}\supset C^\infty(M)_{(1)}\supset \cdots \end{equation}
generalizing the usual order of vanishing of a functions along $N$. The conditions on such filtration to define a weighting can be expressed in various equivalent forms. 

\begin{definition}[Coordinate definition \cite{loi:wei,mel:cor}]\label{def:coordinate}
A \emph{weighting} of order $r$ on $M$, with weights $0\le w_1\le \ldots \le w_{\dim M}\le r$, is  a 
filtration \eqref{eq:sheaf} on the algebra of functions such that $M$ admits a covering by charts,
defining coordinates $x_a$,  with the property that $f\in C^\infty(M)_{(i)}$ if and only if the restriction of $f$ to any such chart domain lies 
in the ideal generated by monomials 
\[ x^s=\prod_a x_a^{s_a} \] 
of weight $\sum_a s_a w_a\ge i$. 
\end{definition}
We refer to the coordinates in this definition as weighted coordinates. Note that the definition does not make explicit reference to a submanifold; it is automatic that 
\[ C^\infty(M)_{(1)}=\I_N\]
is the vanishing ideal of a closed, embedded submanifold $N\subset M$. 
The weighting determines a filtration of the cotangent bundle, restricted to $N$, 
\begin{equation} \label{eq:filtrationcotangent} 
T^*M|_N=(T^*M|_N)_{(0)}\supset (T^*M|_N)_{(1)}\supset\cdots   \supset (T^*M|_N)_{(r)}\supset 0
\end{equation}
where the space of sections of $(T^*M|_N)_{(i)}$ is $C^\infty(N)$-linearly spanned by  all 
\begin{equation}\label{eq:spannedby}
\d f|_N,\ \ \ \ f\in C^\infty(M)_{(i)}
\end{equation}
 In particular, $(T^*M|_N)_{(1)}$ is the conormal bundle of $N$. 
%
%
In \cite{loi:wei}, it is shown that the following coordinate-free definition is equivalent to Definition \ref{def:coordinate}:

\begin{definition}[Intrinsic definition]\label{def:intrinsic}
A weighting of order $r$ on $M$  is given by a filtration 
\eqref{eq:sheaf} on the sheaf of functions, such that 
\begin{enumerate}
\item $C^\infty(M)_{(1)}$ is the vanishing ideal $\ca{I}_N$ of a closed submanifold $N\subset M$.
\item The differentials \eqref{eq:spannedby}  span subbundles 
\[ (T^*M|_N)_{(i)}\subset T^*M|_N,\] 
equal to $0$ for 
$i\ge r$. 
\item For $i\ge 2$, 
\[ C^\infty(M)_{(i)}\cap \ca{I}_N^2=\sum_{0< j<i}C^\infty(M)_{(j)}C^\infty(M)_{(i-j)}.\]
\end{enumerate}
\end{definition}	
Note that a filtration of order $r$ is uniquely determined by the filtration up to degree $i=r$; for $i>r$ we have 
$C^\infty(M)_{(i)}\subset \ca{I}^2$, which is described by the third condition.

\begin{examples}\label{ex:firstexamples}
\begin{enumerate}
	\item A weighting of order $r=1$ is the filtration of functions given by the 
	usual order of vanishing along 	
	a closed submanifold $N$. 
	\item\label{it:b} A weighting or order $r=2$ is equivalent to a submanifold $N\subset M$ together with a subbundle $Q\subset TM|_N$ 
	containing $TN$. Here $f$ has filtration degree $2$ if and only if it vanishes along $N$ and $\d f|_N$ vanishes on $Q$. 
	\item\label{it:c} 
	Suppose $M$ is a \emph{graded bundle} as defined by Grabowski-Rodkievicz \cite{gra:gra}. That is, $M$ 
	carries an action of the monoid $(\R,\cdot)$  by `scalar multiplications' 
	\[ \kappa_t\colon M\to M\] 
	with 
	$\kappa_{st}=\kappa_s\kappa_t$. The fixed point set of such an action is a closed submanifold $N\subset M$. 
	The manifold $M$ acquires a weighting along $N$, where a function 
	$f$ has filtration degree $i$ if and only if $\kappa_t^*f=O(t^i)$. 
	\item\label{it:d} 
	A collection of submanifolds $N_1,\ldots,N_r\subset M$  \emph{intersects cleanly} if for all $m\in M$, there exists a coordinate chart $U$ centered at $m$, with the property that all non-empty $N_i\cap U$ map to open subsets of coordinate subspaces. Let
	$C^\infty(M)_{(i_1,\ldots,i_r)}$ be the ideal of functions vanishing to order $i_1$ on $N_1$, to order $i_2$ on $N_2$, and so on. 
	Then we obtains a weighting of order $r$ along $N=N_1\cap\cdots\cap N_r$ by letting $C^\infty(M)_{(i)}$ be the sum over all 
	$C^\infty(M)_{(i_1,\ldots,i_r)}$ such that $i_1+\ldots+i_r=i$. More generally, one can define \emph{multi-weightings} for families of cleanly intersecting submanifolds; the `total weighting' is then a weighting for the intersection. 
\end{enumerate}	
\end{examples}	
\begin{remark}
The concept of weighting also applies in the holomorphic category, replacing the filtration on $C^\infty(M)$ with a filtration of the \emph{sheaf} $\O_M$ of holomorphic functions.  	
\end{remark}

\section{Weighted normal bundle}
Recall that the \emph{associated graded algebra} of a $\Z$-filtered algebra $\A$ (thus $\A_{(i)}\subset \A_{(i+1)}$ and 
$\A_{(i)}\A_{(j)}\subset \A_{(i+j)}$) is the graded algebra with summands 
\[ \on{gr}(\A)^i=\A_{(i)}/\A_{(i+1)}.\]
The \emph{Rees algebra} of $\A$ is the algebra 
\[ \on{Rees}(\A)\subset \A \otimes \R[u,u^{-1}]\]
consisting of 
Laurent polynomials  $\sum_{i\in \Z} a_i \otimes u^{-i}$ with coefficients $a_i\in \A_{(i)}$. For $a\in \A_{(i)}$, let 
$\wt{a}_{[i]}$ be the corresponding element $a\otimes u^{-i}$ of the Rees algebra, and 
$a_{[i]}\in  \on{gr}(\A)^i$ the image under the quotient map. There is a surjective morphism of graded algebras 
$\on{Rees}(\A)\to \on{gr}(\A)$, taking $\wt{a}_{[i]}$ to $a_{[i]}$.  
Applying these constructions to the filtration of $\A=C^\infty(M)$ given by a weighting (declaring that  $\A_{(i)}=C^\infty(M)$ for $i\le 0$), 
we make the following definitions (inspired by Haj-Higson \cite{hig:eul}): 

\begin{definition}\cite{loi:wei}
The \emph{weighted normal bundle} is the set  of morphisms of unital algebras, 
\[ \nu_\W(M,N)=\on{Hom}_{\on{alg}}(	\on{gr}(C^\infty(M)),\R).\]
The \emph{weighted deformation space} $\nu_W(M,N)$ is 	the set of morphisms of unital algebras, 
\[ \delta_\W(M,N)=\on{Hom}_{\on{alg}}(\on{Rees}(C^\infty(M)),\R).\]
\end{definition}
For $f\in C^\infty(M)_{(i)}$, the elements $f_{[i]},\ \wt{f}_{[i]}$ may be regarded as functions on the weighted normal bundle and weighted deformation space, respectively. As a special case, the constant function $1\in C^\infty(M)$, regarded as having filtration degree $-1$, 
 defines a function $\wt{1}_{[-1]}=1\otimes u$ on the deformation space; we will denote this function by  
\begin{equation}\label{eq:pi}
\pi \colon \delta_\W(M,N)\to \R.\end{equation}
The surjective algebra morphism $\on{Rees}(C^\infty(M))\to \on{gr}(C^\infty(M))$ dualizes to an injective map 
\begin{equation}\label{eq:u0}
\ \nu_\W(M,N)\hra \delta_\W(M,N);
\end{equation}
$f_{[i]}$ is the restriction of $\wt{f}_{[i]}$ under this map. For $c\neq 0$, the algebra morphism 
$ \on{Rees}(C^\infty(M))\to C^\infty(M)$ taking $\sum_i f_i\otimes u^i$ to $\sum c^i f_i$ dualizes to an inclusion 
\begin{equation}\label{eq:uc}
M\hra \delta_\W(M,N).
\end{equation}

\begin{theorem}\cite{loi:wei}. The weighted normal bundle $\nu_\W(M,N)$ is a manifold, in such a way that 
the functions $f_{[i]}$ for $f\in C^\infty(M)_{(i)}$ are smooth.  Similarly,  $\delta_\W(M,N)$ is a manifold, in such a way that 
the functions $\wt{f}_{[i]}$ for $f\in C^\infty(M)_{(i)}$ are smooth. The map \eqref{eq:pi} is a surjective submersion, and 
\eqref{eq:u0} and \eqref{eq:uc} give  identifications 
\[ \pi^{-1}(c)=\begin{cases}
\nu_\W(M,N) & c=0,\\
M& c\neq 0.
\end{cases}\]
The group $\R^\times$ acts smoothly on the deformation space by the \emph{zoom action} $t\mapsto \kappa_t$: Here $t\in \R^\times$ acts on 
$\nu_\W(M,N)$ by fiberwise scalar multiplication, and on $\pi^{-1}(\R^\times)=M\times \R^\times$ by $\kappa_t(x,c)\mapsto (x,t^{-1}c)$. 
\end{theorem}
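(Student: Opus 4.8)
The plan is to build both manifold structures from local models in weighted coordinates (Definition \ref{def:coordinate}) and then verify that these local pictures glue. It is convenient to treat $\delta_\W(M,N)$ as the primary object — a weighted ``deformation to the normal cone'' — and to recover $\nu_\W(M,N)$ as its special fibre $\pi^{-1}(0)$ via \eqref{eq:u0}. Fix a weighted chart $U$ with coordinates $x_1,\ldots,x_n$, $n=\dim M$, and weights $w_1,\ldots,w_n$, ordered so that $w_a=0$ for $a\le k$ and $w_a>0$ for $a>k$; then $N\cap U=\{x_a=0:a>k\}$, and away from $N$ the filtration is trivial so the models below are the evident products. The first and decisive step is to compute $\gr(C^\infty(M))$ and $\on{Rees}(C^\infty(M))$ explicitly on $U$. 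Using a weighted Taylor/Hadamard expansion of a smooth function in the transverse coordinates $x_{k+1},\ldots,x_n$, I expect to identify
\[ \gr(C^\infty(M))|_U\;\cong\;C^\infty(N\cap U)[x_{k+1},\ldots,x_n], \]
polynomials in the transverse coordinates with smooth coefficients along $N$, graded by $\deg x_a=w_a$, where $x_a$ now denotes the class $(x_a)_{[w_a]}$. Correspondingly $\on{Rees}(C^\infty(M))|_U$ is generated over $C^\infty(N\cap U)$ by the $\wt{(x_a)}_{[w_a]}$ together with $\pi=\wt 1_{[-1]}$, with the only relations being the multiplicative ones $\wt{(x^s)}_{[w(s)]}=\prod_a\wt{(x_a)}_{[w_a]}^{\,s_a}$ dictated by the grading.

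Passing to unital $\R$-algebra characters, and invoking the classical fact that every character of $C^\infty(N\cap U)$ is evaluation at a point (so that $\Hom_\alg(C^\infty(N\cap U),\R)=N\cap U$), these algebra isomorphisms should yield bijections
\[ \nu_\W(M,N)|_U\cong(N\cap U)\times\R^m,\qquad \delta_\W(M,N)|_U\cong(N\cap U)\times\R^m\times\R, \]
with $m=n-k$, in which the functions $f_{[i]}$, respectively $\wt f_{[i]}$ and $\pi$, become the obvious polynomial and coordinate functions and are therefore smooth; under the second bijection $\pi$ is projection to the last factor. These local bijections are the candidate charts. Since $\gr$ and $\on{Rees}$ are defined intrinsically from the filtration, two overlapping weighted charts produce literally the same Hom-sets, and the transition map is obtained by re-expressing the weighted leading terms of one coordinate system in terms of another. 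Smoothness of the transitions then reduces to the smooth dependence already exhibited in the local models — namely that the weighted-leading term of a smooth function (and of a product) depends smoothly on the base point in $N$ and on the transverse data.

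The remaining assertions are then essentially formal. The map $\pi$ is last-factor projection in every chart, hence a surjective submersion; its fibres are $\pi^{-1}(0)=\nu_\W(M,N)$ by \eqref{eq:u0} and $\pi^{-1}(c)\cong M$ for $c\neq0$ by \eqref{eq:uc}, the latter identification being chart-wise smooth because $\sum_i f_i\otimes u^i\mapsto\sum_i c^i f_i$ is a smooth family in $c$. The zoom action arises dually from the grading automorphism $u\mapsto t^{-1}u$ of $\on{Rees}(C^\infty(M))\subset C^\infty(M)\otimes\R[u,u^{-1}]$, which preserves the Rees subalgebra; it scales $\pi$ by $t^{-1}$ and each fibre coordinate $(x_a)_{[w_a]}$ by $t^{w_a}$, so in local coordinates it is manifestly smooth, giving the stated formulas on $\pi^{-1}(0)=\nu_\W(M,N)$ (fibrewise scalar multiplication) and on $\pi^{-1}(\R^\times)=M\times\R^\times$.

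The main obstacle is the first step: the explicit local identification of $\gr$ and $\on{Rees}$ and the passage to real characters. Two points demand care. First, one must show the associated graded really is polynomial in the transverse weighted coordinates, which requires controlling the smooth (non-polynomial) remainders in the weighted Taylor expansion and making essential use of condition (c) of Definition \ref{def:intrinsic} to pin down the relations in degrees $i\ge 2$. Second, one must rule out exotic characters, i.e.\ confirm that $\Hom_\alg(C^\infty(N\cap U),\R)$ is exactly $N\cap U$ — the standard Milnor-type theorem for algebras of smooth functions — and that the characters of the full graded/Rees algebra are determined freely by their values on the generators $(x_a)_{[w_a]}$ and $\pi$. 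Once these local models are secured, the gluing and the fibrewise statements follow routinely.
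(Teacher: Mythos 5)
Your overall strategy --- local polynomial models for $\gr(C^\infty(M))$ and $\on{Rees}(C^\infty(M))$ in weighted coordinates, a Milnor-type character argument, and gluing --- is the strategy of the cited source \cite{loi:wei}; note that the present paper gives no proof of this theorem at all, it simply quotes it, so the comparison is with that reference. For the weighted normal bundle your local model is correct: passing to the associated graded algebra kills all smooth remainders of higher weighted order in the weighted Taylor expansion, so $\gr(C^\infty(M))$ over a weighted chart really is $C^\infty(N\cap U)[x_{k+1},\dots,x_n]$, and the character space is $(N\cap U)\times\R^m$.

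However, the analogous claim for the Rees algebra is false, and this is a genuine error, not a presentational one. The degree-zero part of $\on{Rees}(C^\infty(U))$ is \emph{all} of $C^\infty(U)\otimes 1$, not just pullbacks from $N\cap U$ times polynomials in the transverse coordinates: a transverse bump function, or $e^{x_n}\otimes 1$, lies in the Rees algebra but not in the subalgebra generated by $C^\infty(N\cap U)$, the $\wt{(x_a)}_{[w_a]}$, and $\pi$. Consequently characters are more constrained than your product model predicts: if $\chi(\pi)=c\neq 0$, then the restriction of $\chi$ to $C^\infty(U)\otimes 1$ is evaluation at an honest point $x\in U$, and $\chi(\wt{(x_a)}_{[w_a]})=c^{-w_a}x_a(x)$, so the chart image is the open set
\[ \{(y,c)\in\R^n\times\R \;:\; (c^{w_1}y_1,\dots,c^{w_n}y_n)\in U\} \]
(with the convention $c^0=1$), a weighted deformation to the normal cone, and \emph{not} $(N\cap U)\times\R^m\times\R$. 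You can see from the theorem itself that the product claim cannot hold: if every chart of $\delta_\W(M,N)$ were such a product, gluing would exhibit $\pi^{-1}(c)$, $c\neq 0$, as a bundle over $N$, whereas the theorem identifies it with $M$. The repair stays entirely within your framework: take as chart $\chi\mapsto(\chi(\wt{(x_1)}_{[w_1]}),\dots,\chi(\wt{(x_n)}_{[w_n]}),\chi(\pi))$, prove it is a bijection onto the open set above (the case $c=0$ reducing to your normal-bundle model, the case $c\neq 0$ to Milnor's theorem for $C^\infty(U)$), and check transitions as you propose. One more point needs to be made explicit rather than assumed: to know that the charts cover, you must show every character of the \emph{global} algebra localizes at a point, via the standard bump-function argument (if $f\in C^\infty(M)_{(i)}$ vanishes near the base point, choose $g$ of degree $0$ with $gf=f$ and $g$ vanishing near the base point, so $\chi(f_{[i]})=\chi(g_{[0]})\chi(f_{[i]})=0$); this is the content hiding behind your phrase ``rule out exotic characters.'' With these two corrections your outline matches the proof in \cite{loi:wei}.
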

By construction, if $f$ has filtration degree $i$, then the function $\wt{f}_{[i]}$ is homogeneous of degree $i$ for the zoom action: 
\[ \kappa_t^* \wt{f}_{[i]}=t^i \wt{f}_{[i]}.\]
Weighted coordinates $x_a$ on $M$, of weights $w_a$, gives rise to coordinates 
${x_a}_{[w_a]}$ on the weighted normal  bundle. For the deformation space, the functions $\wt{x_a}_{[w_a]}$ together with 
the map $\pi$ (regarded as a function, denoted by a variable `$u$') are coordinates on the deformation space.  

The weighted deformation space is a natural setting for `weighted' linearization problems. Consider for example the filtration of 
on vector fields, 
\[ \mf{X}(M)=\mf{X}(M)_{(-r)}\supset \mf{X}(M)_{(-r+1)}\supset \cdots\]
where a vector field $X$ has filtration degree $i\ge -r$ if the Lie derivative $\L_X$ raises the filtration degree of functions by $i$. 
In this case, $X$ a \emph{homogeneous}  approximation $X_{[i]}$, which is a vector field on $\nu_\W(M,N)$ of homogeneity $i$ 
characterized by its property 
\[ \L_{X_{[i]}}f_{[j]}=(\L_X f)_{[i+j]}.\] 
A similar prescription defines a vector field $\wt{X}_{[i]}$ on the deformation space. One finds that $\wt{X}_{[i]}$ 
 is vertical with respect to the submersion $\pi$, and may be regarded as a family of vector fields on the fibers $\pi^{-1}(c)$, equal to $X_{[i]}$ for $c=0$ and to $c^i X$ for $c\neq 0$. 

See \cite{me:eul} for concrete applications of this set-up to normal form theorems. As another application, one obtains an intrinsic definition of 
\emph{weighted (real)  blow-ups}, along the lines of Debord-Skandalis \cite{deb:blo}.  Let 
\[ \on{S}_\W(M,N)=(\nu_\W(M,N)-N)/\R_{>0}\to N\]
be the weighted sphere bundle. 
\begin{theorem}\cite{loi:wei}
The disjoint union 	
\[ \on{Bl}_\W(M,N)\cong    \on{S}_\W(M,N) \sqcup (M-N)\]
is a manifold with boundary, in such a way that the blow-down map $ \on{Bl}(M,N)\to M$ (given by projection to $N$ of the sphere
bundle and inclusion of $M-N$) is smooth. 
\end{theorem}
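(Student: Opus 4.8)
The plan is to realize $\on{Bl}_\W(M,N)$ as the orbit space of a piece of the deformation space $\delta_\W(M,N)$ under the zoom action, and to read off both the manifold-with-boundary structure and the smooth blow-down map from the already-established smooth structure on $\delta_\W(M,N)$. The point is that the zoom action unifies the two pieces of the blow-up: restricted to $\pi=0$ it is fiberwise scalar multiplication on $\nu_\W(M,N)$, whose orbit space away from the zero section is exactly $\on{S}_\W(M,N)$, while on $\pi\neq 0$ it rescales the deformation parameter on $M\times\R^\times$, whose orbit space is a copy of $M$. Working on the half-space $\pi\ge 0$ and deleting the locus lying over $N$ will carve out precisely $\on{S}_\W(M,N)\sqcup (M-N)$.

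Concretely, I would first describe the locus $N_\delta\subset\delta_\W(M,N)$ lying over $N$. In weighted coordinates, where $\delta_\W(M,N)$ carries the coordinates $\wt{x_a}_{[w_a]}$ together with $u=\pi$, it is cut out by the vanishing of all weight-$\ge 1$ coordinates $\wt{x_a}_{[w_a]}$, so it is a closed submanifold $N_\delta\cong N\times\R$, appearing at $\pi=0$ as the zero section of $\nu_\W(M,N)$ and at $\pi=c\neq 0$ as $N\subset M$ via \eqref{eq:uc}. Since $\pi$ is a submersion, $\pi^{-1}([0,\infty))$ is a manifold with boundary $\pi^{-1}(0)=\nu_\W(M,N)$, and $N_\delta$ meets it in the neatly embedded $N\times[0,\infty)$. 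I then set
\[ \Omega=\pi^{-1}([0,\infty))\setminus N_\delta, \]
an open subset, hence a manifold with boundary $\p\Omega=\nu_\W(M,N)-N$.

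Next I would restrict the zoom action to $t\in\R_{>0}$ and check that it acts freely and properly on $\Omega$. Freeness is immediate: on $\p\Omega$ it is scalar multiplication on nonzero normal vectors, and on the interior $\pi^{-1}(\R_{>0})-N_\delta\cong (M-N)\times\R_{>0}$ it acts by $(x,c)\mapsto(x,t^{-1}c)$; moreover $\kappa_t$ preserves $\{\pi=0\}$, so no orbit crosses the boundary. Granting properness, $\Omega/\R_{>0}$ is a Hausdorff manifold with boundary, with boundary $(\nu_\W(M,N)-N)/\R_{>0}=\on{S}_\W(M,N)$ and interior $\bigl((M-N)\times\R_{>0}\bigr)/\R_{>0}=M-N$, which gives the asserted identification. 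I expect the \emph{main obstacle} to be exactly this properness (equivalently, Hausdorffness of the quotient and compatibility of charts across $\pi=0$): one must control orbits running up against the boundary. I would handle it by producing a globally defined function $\rho$ that is weighted-homogeneous of some positive degree $D$ (patched from weighted norms of the transverse coordinates via a partition of unity pulled back from $N$, so as to respect homogeneity) and strictly positive on all of $\Omega$; since $\kappa_t^\ast\rho=t^{D}\rho$, the identity $\rho(\kappa_{t_n}p_n)=t_n^{D}\rho(p_n)$ forces $t_n$ into a compact subset of $\R_{>0}$ along any sequence with $p_n\to p$ and $\kappa_{t_n}p_n\to q$ in $\Omega$.

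Finally, for the blow-down map $\on{Bl}_\W(M,N)\to M$, I would use that the inclusion $M\times\R^\times\hra\delta_\W(M,N)$ is given in weighted coordinates by $x_a=u^{w_a}\,\wt{x_a}_{[w_a]}$, with $w_a\in\N$. Near a boundary orbit, where some transverse coordinate $\wt{x_b}_{[w_b]}$ (with $w_b\ge 1$) is nonzero, the level set $\{\wt{x_b}_{[w_b]}=\mathrm{const}\}$ is transverse to the orbits and gives a chart for $\Omega/\R_{>0}$ with boundary-defining function $u\ge 0$. In this chart the blow-down pulls $x_a$ back to $u^{w_a}\,\wt{x_a}_{[w_a]}$, which is smooth (indeed polynomial in $u$) since the weights are nonnegative integers; this simultaneously establishes smoothness across the boundary and shows that the map restricts to the projection $\on{S}_\W(M,N)\to N$ at $u=0$ (only the weight-$0$ coordinates survive) and to the inclusion $M-N\hra M$ on the interior.
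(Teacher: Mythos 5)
Your proposal is essentially identical to the paper's own proof: the paper constructs $\on{Bl}_\W(M,N)$ as the quotient of the manifold with boundary $(\delta_\W(M,N)-(N\times \R))\cap \pi^{-1}(\R_{\ge 0})$ --- precisely your $\Omega$ --- by the zoom action of $\R_{>0}$, and reduces the theorem to freeness and properness of that action, exactly as you do. The only repair needed is in your properness patching: the partition of unity should be pulled back from $M$ via the invariant weight-zero lifts $\wt{\chi}_{[0]}$ (not from $N$, to which $\Omega$ admits no projection), and you need an additional invariant term such as $\wt{\chi_\infty}_{[0]}\,\pi^{-D}$ with $\chi_\infty$ supported in $M-N$, since the weighted norms of transverse coordinates are only defined over charts near $N$ and by themselves cannot make $\rho$ positive on the part of $\Omega$ lying far from $N$.
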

The weighted blow-up is constructed as the quotient of the manifold with boundary 
$(\delta_\W(M,N)-(N\times \R))\cap \pi^{-1}(\R_{\ge 0})$ under the zoom action of $\R_{>0}$; the proof amounts to 
showing that the latter action is free and proper.

\section{Lie filtrations}
Many examples of \emph{hypo-elliptic} operators are obtained from \emph{Lie filtrations} on manifolds, given by a 
filtration of the tangent bundle 
\[ TM=H_{(-r)}\supseteq H_{(-r+1)}\supseteq\cdots \supseteq H_{(0)}=0\]
such that the resulting filtration on vector fields is compatible with brackets 
$[\Gamma(H_{(i)}),\Gamma(H_{(j)})]\subseteq \Gamma(H_{(i+j)})$. 
Given such a Lie filtration, the associated graded bundle 
\[ \on{gr} TM=\bigoplus_{i=-r}^{-1} H_{(i)}/H_{(i+1)}\]
is a family of nilpotent Lie algebras (not necessarily a locally trivial Lie algebra bundle). It exponentiates to a family of nilpotent Lie groups,  called the \emph{osculating groupoid} \cite{erp:tan} 
or \emph{tangent cone} \cite{choi:tan},
\begin{equation}\label{eq:osculating} T_H M\to M.\end{equation}
Lie filtrations give rise to weightings in  natural ways. Suppose that $N\subset M$ is a filtered submanifold as defined by Haj-Higson \cite{hig:eul}: that is, the intersections $H_{(i)}\cap TN$ are smooth subbundles.  
As shown in \cite{loi:lie}, one obtains a weighting of order $r$ along $N$ by setting 
\[C^\infty(M)_{(0)}=C^\infty(M),\ C^\infty(M)_{(1)}=\I_N\] 
and
\[ C^\infty(M)_{(i)}=\{f|\ \forall X\in \Gamma(H_{(j)})\colon \L_X f\in C^\infty(M)_{(i+j)}\}\] 
for $i>1$. The weighted normal bundle $\nu_W(M,N)\to N$ and weighted deformation space $\delta_\W(M,N)$ are exactly  the normal bundle and deformation space of the filtered submanifold, as defined in \cite{hig:eul}. In particular, the osculating groupoid is 
\begin{equation}\label{eq:oscu}
 T_HM=\nu_\W(M\times M,M_\Delta)
\end{equation}
and it comes with a version of Connes' tangent groupoid \cite{con:non} 
\begin{equation}\label{eq:connes}
 \T_H M=\delta_\W(M\times M,M_\Delta).\end{equation}
The groupoid structure of $\T_H M$ over $M\times \R$ is obtained from the groupoid structure on $\pi^{-1}(0)=T_HM$ (as a bundle of nilpotent groups) and the pair groupoid structure on $\pi^{-1}(c)=M\times M$ for $c\neq 0$. More conceptually, it follows from the functorial properties of the weighted deformation space construction -- see the following section. 
This groupoid $\T_H M$ was used by  van-Erp and Yuncken \cite{erp:gro} (building on ideas of Debord and Skandalis \cite{deb:adi})   
to develop a pseudo-differential calculus for manifolds with Lie filtrations.  

\begin{remark}
There are examples of hypoelliptic operators that do not correspond to Lie filtrations, but instead 
are related to more complicated 
`singular' Lie filtrations. See in particular the recent work of Androulidakis-Mohsen-Yuncken \cite{and:conv,and:pse}. In 
\cite{loi:lie}, it is shown that singular Lie filtrations once again define weightings along suitable `clean' submanifolds. 
\end{remark}


\section{Multiplicative weightings}
The groupoid structures on \eqref{eq:oscu} and \eqref{eq:connes} may be understood from the weightings perspective: The weighting on  $M\times M$ is compatible with the pair groupoid structure, the diagonal is a weighted subgroupoid, and for this reason the resulting weighted normal bundle and deformation space acquire groupoid structure. To explain this in more detail, we need to consider weightings on Lie groupoids. 

Suppose $G\rra M$ is a Lie groupoid. Denote by $\sz,\tz\colon G\to M$ the source and target maps, and by $\Mult_G\colon G^{(2)}\to G$ 
the groupoid multiplication, defined on the set of \emph{composable arrows} (i.e., pairs $(g,h)\in G\times G$ such that $\sz(g)=\tz(h)$). 
What does it mean for a weighting on $G$ of order $r$ to be compatible with the groupoid structure? 
In \cite{loi:wei}, using the description of weightings in terms of submanifolds $Q\subset T_{r-1}G$ as in 
Section \ref{sec:higher} below, a weighting is called multiplicative if $Q$ is a Lie subgroupoid of 
$T_{r-1}G\rra T_{r-1}M$ . This definition, while short, is somewhat difficult to verify in practice. A more direct approach  was developed in the Ph.D. thesis of Dan Hudson \cite{hud:mul}. 
\begin{theorem}\cite{hud:mul} A weighting on a Lie groupoid $G\rra M$ is multiplicative if and only if 
\begin{enumerate}
	\item the source and target maps $\sz,\tz\colon G\to M$ are \emph{weighted submersions}, 
	\item the set of units $M\to G$ is a \emph{weighted submanifold},
	\item the groupoid multiplication $\Mult_G\colon G^{(2)}\to G$ is a weighted submersion.   
\end{enumerate}	
The submanifold of $G$ defined by the weighting is a Lie subgroupoid $H\rra N$. 
\end{theorem}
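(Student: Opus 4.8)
\emph{Proof strategy.} The plan is to pass to the higher-tangent description of weightings from Section~\ref{sec:higher}, in which a weighting of order $r$ is encoded by a submanifold $Q\subset T_{r-1}$ of the $(r-1)$-th order tangent bundle, and then to recognize the three listed conditions as precisely the data needed to make the pair $Q_G\rra Q_M$ a Lie subgroupoid of $T_{r-1}G\rra T_{r-1}M$, which is the defining property of multiplicativity. The essential structural input is that the prolongation $T_{r-1}$ is a functor carrying a Lie groupoid $G\rra M$ to a Lie groupoid $T_{r-1}G\rra T_{r-1}M$, sending $\sz,\tz,\Mult_G$ and the unit inclusion to the corresponding structure maps, and commuting with the relevant fibered products, so that $T_{r-1}(G^{(2)})=(T_{r-1}G)^{(2)}=T_{r-1}G\times_{T_{r-1}M}T_{r-1}G$.

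First I would record the dictionary between the conditions and the submanifolds $Q$: a weighted map $\phi$ is a weighted submersion exactly when $T_{r-1}\phi$ restricts to a (fiberwise surjective) submersion $Q_1\to Q_2$, and an embedding exhibits a weighted submanifold exactly when $T_{r-1}$ of the inclusion carries $Q_1$ into $Q_2$ as an embedded submanifold. Granting this, condition (a) says $T_{r-1}\sz$ and $T_{r-1}\tz$ restrict to submersions $Q_G\to Q_M$, the candidate source and target. Here condition (a) does genuine work beyond smoothness of $G^{(2)}$: because $\sz,\tz$ are \emph{weighted} submersions the fibered product $G^{(2)}$ inherits a weighting whose associated submanifold is exactly $Q_G\times_{Q_M}Q_G=Q_G^{(2)}$, so that $\Mult_G$ may even be spoken of as a weighted map. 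Condition (c) then reads: $T_{r-1}\Mult_G$ carries $Q_G^{(2)}$ submersively into $Q_G$, i.e.\ $Q_G$ is closed under the prolonged multiplication. Condition (b) supplies the unit embedding $Q_M\hra Q_G$.

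With source, target, units and multiplication all restricting to $Q_G$ over $Q_M$, the groupoid axioms for $Q_G\rra Q_M$ follow by restricting those already valid in $T_{r-1}G\rra T_{r-1}M$: functoriality of $T_{r-1}$ turns every commuting diagram expressing a groupoid axiom in $G$ into the corresponding one in $T_{r-1}G$. Closure under inversion need not be posited separately, since $\on{Inv}$ interchanges $\sz$ and $\tz$ and is thus a weighted diffeomorphism once both are weighted submersions (equivalently, a Lie groupoid is determined by source, target, multiplication and units). Hence $Q_G\rra Q_M$ is a Lie subgroupoid, i.e.\ the weighting is multiplicative. Conversely, if $Q_G\rra Q_M$ is a Lie subgroupoid its structure maps are by construction the restrictions of $T_{r-1}\sz,T_{r-1}\tz,T_{r-1}\Mult_G$ and of the unit inclusion, and they are submersions, respectively an embedding, within the subgroupoid; reading the dictionary in reverse returns conditions (a)--(c). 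Finally, $H\subset G$ and $N\subset M$ are the filtration-degree-one parts of $Q_G$ and $Q_M$; the same restriction argument at level one shows $\sz,\tz$ map $H$ to $N$, units land in $H$, and products of composable $H$-arrows lie in $H$, so $H\rra N$ is a Lie subgroupoid.

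The step I expect to be the main obstacle is the compatibility of the prolongation functor with fibered products, namely the identity $T_{r-1}(G\times_M G)=T_{r-1}G\times_{T_{r-1}M}T_{r-1}G$ together with the matching statement for the induced weightings --- that the submanifold $Q$ associated with the fiber-product weighting on $G^{(2)}$ is the fiber product $Q_G^{(2)}$ of the factors. This is precisely where the weighted-submersion hypothesis on $\sz$ and $\tz$ is indispensable, and it is what licenses the identification of $T_{r-1}\Mult_G$ with the multiplication of the prolonged groupoid. Once this compatibility and the weighted-submersion/submanifold dictionary are established, everything else is a formal diagram chase transported along $T_{r-1}$.
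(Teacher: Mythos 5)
First, a point of comparison: the paper does not actually prove this theorem --- it is quoted from Hudson's thesis \cite{hud:mul}, and the surrounding text only supplies the definitions of weighted submanifold and weighted submersion, together with the remark that condition (a) is what makes condition (c) meaningful (it gives $G^{(2)}$ the structure of a weighted submanifold of $G\times G$). So your proposal must be judged on its own merits. As a strategy it starts in the right place: multiplicativity is \emph{defined} in \cite{loi:wei} by $Q_G\subset T_{r-1}G$ being a Lie subgroupoid of $T_{r-1}G\rra T_{r-1}M$, so the theorem is exactly a translation statement, and you correctly identify the two translation tools needed (the ``dictionary'' and the compatibility of $T_{r-1}$ and of the $Q$-construction with fibered products). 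However, the dictionary is not a known statement you may quote: its two directions are precisely the coordinate-free characterizations of weighted submersions and weighted submanifolds whose proof is a substantial part of the work in \cite{hud:mul}. In particular the direction ``$T_{r-1}\phi$ restricts to a submersion of the $Q$'s $\Rightarrow$ $\phi$ is a weighted submersion,'' on which your ``only if'' direction rests, requires constructing simultaneous weighted and submersion coordinates, and nothing in your write-up indicates how to do this.

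Second, and more decisively, the inversion step in your ``if'' direction is wrong as argued. Conditions (a)--(c), translated through your dictionary, say only that $Q_G$ contains the units over $Q_M$, is closed under multiplication, and that the restricted structure maps are submersions; this makes $Q_G$ a sub-\emph{semi}groupoid, and such data does not formally imply closure under inversion. Concretely, take $G=\R_{>0}$ (a Lie group, viewed as a groupoid over $M=\pt$) and $Q_G=\{2^n\colon n\in\Z_{\ge 0}\}$: this is a closed zero-dimensional submanifold containing the unit and closed under multiplication, all restricted structure maps are (trivially) submersions, yet $Q_G$ is not closed under inversion and is not a subgroupoid. Your one-line justification --- that $\on{Inv}$ interchanges $\sz$ and $\tz$ and is ``thus'' a weighted diffeomorphism --- is a non sequitur: weightedness of $\sz$ and $\tz$ separately gives no control on $\on{Inv}$, and ``a groupoid is determined by source, target, units and multiplication'' does not imply that a subset closed under those operations is closed under inversion. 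A correct argument must use structure special to submanifolds $Q$ arising from weightings --- for instance, that $Q_G$ is closed and invariant under the scaling action, hence has connected cone-like fibers, which permits an open-and-closed argument with left translations inside $T_{r-1}G$; alternatively, one can show that $(\pr_1,\Mult_G)\colon G^{(2)}\to G\times_{\tz,\tz}G$ is a bijective weighted submersion, hence a weighted diffeomorphism, and deduce from this that inversion is weighted. Either way, this is a genuine missing step, not a formality, and it is exactly the kind of point the ``more direct approach'' of \cite{hud:mul} is designed to handle.
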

The theorem requires some explanation. To begin, a \emph{weighted submanifold} of a weighted manifold $M$ is a submanifold with the property that there exists weighted coordinates for $M$ that are also submanifold coordinates. 
Such a submanifold is then a weighted manifold in its own right. Similarly, a \emph{weighted submersion} from a weighted manifold $M$ 
to another weighted manifold $M'$ is a submersion $f\colon M\to M'$ such that for each $p\in M$ there exists weighted coordinates around $p$ and $f(p)$, respectively,  that are also  submersion coordinates. (These conditions also admit coordinate-free formulations \cite{hud:mul}.) 
Under the assumption that $\sz,\tz\colon G\to M$ are weighted submersions, it is shown in \cite{hud:mul} that $G^{(2)}$ is a weighted submanifold of $G\times G$, and hence the condition on $\Mult_G$ being a weighted submersion make sense.

In joint work with Dan Hudson, we also found the following alternative version of this result. Given a weighting of $G\rra M$ along a 
subgroupoid $H\rra N$, we obtain a filtration of $T^*G|_H$, and hence dually of $TG|_H$, where $TG_{(-i)}$ is the 
annihilator of $(T^*G|_H)_{(i+1)}$.

\begin{theorem}[Hudson-M] 
A weighting of $G\rra M$ along $H\rra N$ is multiplicative if and only if the units $M\subset G$ are a weighted submanifold, 
the graph of the groupoid multiplication $\on{Gr}(\Mult_G)\subset G\times G\times G$ is a weighted submanifold, and 
each subbundle
\[ (TG|_H)_{(-i)}\subset TG\]
is a Lie subgroupoid of the tangent groupoid $TG\rra TM$. 	
\end{theorem}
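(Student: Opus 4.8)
The plan is to deduce this from the preceding theorem of Hudson by matching its three hypotheses, one at a time, with the three conditions stated here; since the units condition is literally identical in both, the real work lies in the remaining two equivalences. First I would set up the infinitesimal picture. Applying the tangent functor to $\sz,\tz$ and $\Mult_G$ makes $TG\rra TM$ a Lie groupoid with source $T\sz$, target $T\tz$ and multiplication $T\Mult_G$, and since $H\rra N$ is a subgroupoid, the restriction $TG|_H\rra TM|_N$ is a subgroupoid of it. I would then invoke the coordinate-free descriptions of weighted submanifolds and weighted submersions alluded to after the Hudson theorem: a submersion $f\colon M\to M'$ is a \emph{weighted} submersion precisely when $f(N)\subset N'$ and $Tf$ carries each $(TM|_N)_{(-i)}$ onto $(TM'|_{N'})_{(-i)}$, with the dual statement for weighted submanifolds in terms of the conormal filtration. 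Because the units $M\hra G$ are assumed to be a weighted submanifold, $M$ inherits a weighting along $N$ and hence a tangent filtration $(TM|_N)_{(-i)}$ against which these conditions can be read.

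The source–target dictionary is the first equivalence. The condition that each $(TG|_H)_{(-i)}$ be a subgroupoid of $TG\rra TM$ forces its image under $T\sz$ and under $T\tz$ to be one and the same subbundle of $TM|_N$; I would identify this common base with $(TM|_N)_{(-i)}$ and thereby read off, via the coordinate-free characterization, that $\sz$ and $\tz$ are weighted submersions. Conversely, if $\sz,\tz$ are weighted submersions then $T\sz,T\tz$ produce exactly this base, so the source and target axioms of a subgroupoid hold. The remaining subgroupoid axioms for $(TG|_H)_{(-i)}$ — closure under $T\Mult_G$, under units, and under inversion — are the linearizations of the multiplicativity of the weighting, and I would match the first of these with the graph condition of the next step and the last two with the units condition.

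The multiplication dictionary is the second equivalence. Granting that $\sz,\tz$ are weighted submersions, so that (as recalled in the excerpt) $G^{(2)}$ is a weighted submanifold of $G\times G$, I would establish the clean statement: $\on{Gr}(\Mult_G)\subset G\times G\times G$ is a weighted submanifold for the product weighting along $H\times H\times H$ if and only if $\Mult_G\colon G^{(2)}\to G$ is a weighted submersion. One direction is the general fact that the graph of a weighted submersion is a weighted submanifold; the other recovers the submersion from its graph by projection, using the diffeomorphism $\on{Gr}(\Mult_G)\cong G^{(2)}$, $(g,h)\mapsto(g,h,gh)$. Passing to differentials, this same equivalence shows that $T\Mult_G$ preserves the tangent filtration, which is exactly the multiplication-closure axiom left open above. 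With all three conditions matched to the hypotheses of the Hudson theorem, the statement follows by invoking that theorem in both directions.

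The main obstacle I anticipate is the dictionary itself. The subgroupoid condition on the $(TG|_H)_{(-i)}$ is a first-order, tangent-level statement imposed degree by degree, whereas being a weighted submersion or a multiplicative weighting is a statement about the entire filtered function algebra. The delicate point is to verify that the per-degree tangent conditions genuinely \emph{assemble} to the all-orders statements — in particular that closure under $T\Mult_G$ for every $i$ is equivalent to, and not merely implied by, multiplicativity. This is precisely where the graph and units conditions earn their place, since they supply the higher-order jet data along $H$ that the tangent filtration alone cannot detect; reconciling these two layers is the step I expect to require the most care.
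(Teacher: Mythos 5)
The paper states this theorem without proof --- it is announced as joint work (Hudson--M), with the detailed treatment in the cited thesis --- so there is no in-paper argument to compare against; your proposal has to be judged on its own merits, and as it stands it has a genuine gap. The gap is structural: you reduce to the preceding theorem of Hudson by matching conditions \emph{pairwise} --- units with units, condition (iii) with ``$\sz,\tz$ are weighted submersions,'' and the graph condition (ii) with ``$\Mult_G$ is a weighted submersion'' --- but the individual equivalences you need are false. Your claimed coordinate-free characterization, that a submersion $f$ is a weighted submersion precisely when $Tf$ carries each $(TM|_N)_{(-i)}$ onto $(TM'|_{N'})_{(-i)}$, fails for weightings of order $r\ge 3$: the tangent-level filtration does not determine the weighting. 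Concretely, on $\R^2$ along $N=\{0\}$, the weighting with weighted coordinates $(x,y)$ of weights $(1,3)$ and the weighting with weighted coordinates $(x,\,y-x^2)$ of weights $(1,3)$ induce the \emph{same} filtration of $T^*M|_N$ (since $\d(y-x^2)|_0=\d y|_0$) but are different weightings ($x^2$ has filtration degree $2$ in the first and is not of degree $3$, while $y-x^2$ has degree $3$ only in the second); the identity map therefore satisfies your tangent-level criterion without being a weighted morphism, let alone a weighted submersion. Consequently condition (iii), which is a degree-by-degree statement about subbundles of $TG|_H$, cannot by itself be equivalent to $\sz,\tz$ being weighted submersions, and by the same token the graph being a weighted submanifold cannot by itself be equivalent to $\Mult_G$ being a weighted submersion.

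This is not a repairable local slip but the crux of the theorem: its content is precisely that the three conditions \emph{jointly} recover multiplicativity, with the graph condition (ii) supplying the higher-order jet data along $H$ that the tangent filtration cannot see, and condition (iii) supplying the linear, degree-wise compatibility that the graph condition alone does not organize into weighted submersion normal forms for $\sz$, $\tz$, and $\Mult_G$. Any correct proof of the converse direction $(\mathrm{i})+(\mathrm{ii})+(\mathrm{iii})\Rightarrow$ multiplicative must therefore intertwine the three hypotheses --- for instance, using (i) and (ii) to control the filtration on pullbacks of functions to all orders, and (iii) to produce the rank/transversality statements needed to build simultaneous weighted submersion coordinates --- rather than discharge them one at a time against Hudson's three conditions. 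Your final paragraph explicitly concedes this (``the per-degree tangent conditions \ldots{} assemble to the all-orders statements'' is flagged as the step requiring care), but the proposal offers no mechanism for that assembly; the forward direction (multiplicative $\Rightarrow$ (i), (ii), (iii)) is essentially fine modulo standard facts, while the converse --- which is the substance of the statement --- remains unproven.
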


Given a multiplicative weighting of $G\rra M$ along $H\rra N$, it is shown in \cite{hud:mul} that 
\[ \nu_\W(G,H)\rra \nu_\W(M,N),\ \ \delta_\W(G,H)\rra \delta_W(M,N)\]
are Lie groupoids. 

\begin{remark}
The infinitesimal counterpart to multiplicative weightings on Lie groupoids are the infinitesimally multiplicative weightings on Lie 
algebroids. Again, these may be defined in terms of higher tangent bundles (Section \ref{sec:higher}; see also \cite{loi:wei}). A more 
direct description was obtained in \cite{hud:mul}, which also establishes the compatibility of the 
(weighted) normal bundle and deformation space functors.
\end{remark}

\begin{remark}
At this stage, one should  develop a pseudo-differential calculus for groupoids with multiplicative weightings, along the lines of 
van Erp and Yuncken \cite[Section 12]{erp:gro}, see also Debord-Skandalis \cite{deb:lie}.  
We hope to return to this problem in future work. 	
\end{remark}

\section{Appendix: Weightings in terms of jet bundle} \label{sec:higher}
According to Example \ref{ex:firstexamples}\eqref{it:b}, a weighting of order $r=2$ is described by a submanifold $Q\subset T_1M$. 
This suggests a generalization to weightings of arbitrary order, in terms of a sequence of submanifolds of higher tangent bundles. 
Recall \cite{kol:nat} that the $k$-th order tangent bundle 
\[ T_kM\to M\]
is the fiber bundle whose elements are the  $k$-jets $j^k_0(\gamma)$ of curves $\gamma\colon \R\to M$. 
The scalar multiplication of the monoid $(\R,\cdot)$ on $\R$ defines an action on curves $\gamma$, which descends to a
monoid action $t\mapsto \kappa_t$ (as in Example \ref{ex:firstexamples}\eqref{it:d}) on $T_kM$. The fixed point set  for this action is $M\subset T_kM$, embedded as jets of constant paths. 


Every $f\in C^\infty(M)$ gives rise to a sequence of lifts $f^{(j)}\in C^\infty(T_kM)$ for $j\le k$, by
\[ f^{(j)}( j^r_0(\gamma))=\f{1}{j!} \f{d^j}{\d t^j}|_{t=0} f(\gamma(t)).\]
Given local coordinates $x_a$ on $U\subset M$, the lifts $x_a^{(j)}$ with $0\le j\le k$ define coordinates on $T_kM|_U$. 

Suppose now that the filtration \eqref{eq:sheaf} defines an order $r$  weighting of $M$ along $N$. Let 
\[ Q\subset T_{r-1}M\] be the subset cut out by lifts $f^{(i-1)}$ of functions $f$ of filtration degree 
$i\le r$. 
It is shown in  \cite{loi:wei} that $Q$ is a submanifold, and that the weighting may be recovered from $Q$ by letting 
\begin{equation}\label{eq:iweight}
C^\infty(M)_{(i)}=\{f\in C^\infty(M)|\ f^{(i-1)}|_Q=0\}\end{equation}
for $i\le r$. 

This raises the question of which submanifolds $Q\subset T_{r-1}M$ define weightings. A rather complicated answer was given in \cite{loi:wei}; a simpler criterion was recently  found by Aaron Gootjes-Dreesbach \cite{goo:won}. 

\begin{theorem} (See \cite[page 16]{goo:won}.) 
	A closed submanifold $Q\subset T_{r-1}M$ defines a weighting of order $r$  if and only if:
	\begin{enumerate}
		\item\label{it:ta} The vanishing ideal $\I_Q$ is generated by functions of the form $f^{(i)}$.
		\item\label{it:tb} If $f^{(i)}\in \I_Q$ then also $f^{(j)}\in\I_Q$  for all $j\le i$. 
	\end{enumerate}
\end{theorem}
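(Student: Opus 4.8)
The plan is to establish the theorem by showing that the two stated conditions on $Q\subset T_{r-1}M$ are precisely what is needed to make the recovery formula \eqref{eq:iweight} yield a filtration satisfying the intrinsic Definition \ref{def:intrinsic}, and conversely that any weighting produces a $Q$ with these two properties. Since the excerpt already records that a weighting determines a submanifold $Q$ from which the filtration is recovered, one direction is essentially the forward construction; the substance is the reverse implication. So the first step is to fix a closed submanifold $Q$ satisfying \eqref{it:ta} and \eqref{it:tb}, define $C^\infty(M)_{(i)}$ by \eqref{eq:iweight} for $i\le r$, extend to $i>r$ via the third condition of Definition \ref{def:intrinsic}, and then verify the three intrinsic axioms.

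First I would exploit the coordinate description of $T_{r-1}M$: given local coordinates $x_a$ on $M$, the lifts $x_a^{(j)}$ for $0\le j\le r-1$ are coordinates on $T_{r-1}M$, and the higher lift $f^{(i)}$ of an arbitrary $f$ is a universal polynomial (a Fa\`a di Bruno expression) in the $x_a^{(j)}$ with coefficients the partial derivatives of $f$ at the basepoint. The key computational fact I would isolate is the behavior of $f^{(i)}$ under the zoom action $\kappa_t$ on $T_{r-1}M$: since $\kappa_t$ rescales $j$-jets, one checks $\kappa_t^* f^{(i)} = t^i f^{(i)}$, so each $f^{(i)}$ is homogeneous of degree $i$. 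Condition \eqref{it:tb} should then be read as saying that $\I_Q$ is a \emph{homogeneous} ideal for the zoom action, i.e. that $Q$ is invariant under $\kappa_t$; this is the geometric content that lets one assign well-defined weights. I would make this equivalence explicit, as it converts the somewhat opaque hypothesis \eqref{it:tb} into zoom-invariance of $Q$, which is the natural structural condition.

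Next I would verify the three axioms of Definition \ref{def:intrinsic} for the filtration defined by \eqref{eq:iweight}. That $C^\infty(M)_{(1)}=\I_N$ for a submanifold $N$ follows from condition \eqref{it:ta} at the lowest level: the degree-zero part of $Q$'s defining data cuts out $N=Q\cap M$ (using the fixed-point embedding $M\subset T_{r-1}M$), and \eqref{it:ta} guarantees $\I_N$ is generated by the relevant lifts. The subbundle condition (axiom 2) amounts to checking that the differentials $\d f|_N$ for $f\in C^\infty(M)_{(i)}$ span a subbundle of constant rank; here the homogeneity/zoom-invariance from \eqref{it:tb} ensures the rank does not jump, and the smoothness of $Q$ gives local constancy. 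The third axiom, the multiplicativity relation $C^\infty(M)_{(i)}\cap \I_N^2=\sum_{0<j<i}C^\infty(M)_{(j)}C^\infty(M)_{(i-j)}$, is where condition \eqref{it:ta} does its real work: because $\I_Q$ is generated by lifts $f^{(i)}$ rather than arbitrary functions, a Leibniz-type identity for the lifts — namely $(fg)^{(i)}=\sum_{j} f^{(j)} g^{(i-j)}$ — lets one factor any element of the intersection through products of lower-filtration functions.

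The main obstacle I anticipate is precisely the verification of the third axiom and the constant-rank subbundle property simultaneously, because these are the conditions that fail for a naively chosen $Q$ and that the two hypotheses are engineered to rescue. Concretely, the difficulty is controlling the interaction between the product structure (Leibniz rule for lifts) and the closure condition \eqref{it:tb}: one must show that membership $f^{(i)}\in\I_Q$ propagates correctly under multiplication without generating obstructions in intermediate degrees. I expect the cleanest route is to pass to weighted coordinates by using \eqref{it:ta} to choose generators of $\I_Q$ of the form $x_a^{(j)}$ after a coordinate change, thereby reducing $Q$ to a coordinate subspace of $T_{r-1}M$; once $Q$ is linearized in this way, conditions \eqref{it:ta} and \eqref{it:tb} force the generators to be exactly the monomial-lift pattern characterizing Definition \ref{def:coordinate}, and the three intrinsic axioms become the standard verification for the monomial weighting. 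Establishing that such adapted coordinates exist — that \eqref{it:ta} together with zoom-invariance permits straightening $Q$ to a coordinate subspace compatible with the grading — is the technical heart, and is where I would spend the most care.
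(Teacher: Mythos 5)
Your overall strategy --- use \eqref{it:ta} to straighten $Q$ into a coordinate subspace of $T_{r-1}M$ and thereby reduce to Definition \ref{def:coordinate} --- is the paper's strategy, but your proposal has a genuine gap in two places. First, your key structural claim, that condition \eqref{it:tb} amounts to zoom-invariance of $Q$ (homogeneity of $\I_Q$), is false: zoom-invariance is strictly weaker. Take $M=\R$, $r=2$, so that $T_1M=TM$ has coordinates $(x,\dot x)$, and let $Q$ be the zero section $\{\dot x=0\}$. Then $Q$ is invariant under $\kappa_t(x,\dot x)=(x,t\dot x)$, and $\I_Q$ is generated by the homogeneous lift $x^{(1)}=\dot x$, so condition \eqref{it:ta} and zoom-invariance both hold; but \eqref{it:tb} fails, since $f^{(1)}=f'(x)\,\dot x$ vanishes on $Q$ for \emph{every} $f$ while $f^{(0)}=f$ does not. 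Indeed, \eqref{eq:iweight} applied to this $Q$ gives $C^\infty(M)_{(1)}=0$ and $C^\infty(M)_{(2)}=C^\infty(M)$, which is not even a nested filtration. Condition \eqref{it:tb} couples the lifts of the \emph{same} function across different degrees; this is strictly more than homogeneity of the ideal, and it is exactly what your later steps (nestedness of the filtration, constant rank of the subbundles) silently rely on.

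Second, you explicitly defer ``the technical heart'' --- the existence of adapted coordinates straightening $Q$ --- but that deferred step \emph{is} the proof; the rest is bookkeeping. The paper carries it out as follows: by \eqref{it:ta}, the conormal bundle $\on{ann}(TQ)|_N$, $N=Q\cap M$, is spanned by differentials of functions $f^{(j)}$, hence is a \emph{graded} subbundle of $T^*(T_{r-1}M)|_M=\bigoplus_j (T^*(T_{r-1}M)|_M)^j$; each graded summand maps isomorphically onto a subbundle $(T^*M|_N)_{(j+1)}\subset T^*M|_N$ via $\d f^{(j)}|_M\mapsto \d f$, and \eqref{it:tb} makes these subbundles nested. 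The weighted coordinates are then built by a downward induction on degree: choose $x_a$, $a>k_{r-1}$, with $x_a^{(r-1)}\in\I_Q$ and $\d x_a^{(r-1)}|_p$ a basis of the top graded piece of the conormal space; condition \eqref{it:tb} guarantees that $x_a^{(r-2)}\in\I_Q$ with independent differentials, so these functions can be completed to a spanning set one degree down, and so on, until one obtains coordinates for which the subspace $\{x_a^{(j)}=0,\ a>k_{j-1}\}$ coincides with $Q$ locally. Without this induction (or an equivalent argument), your verification of the axioms of Definition \ref{def:intrinsic} --- in particular the constant-rank condition and the product identity, for which your Leibniz formula $(fg)^{(i)}=\sum_j f^{(j)}g^{(i-j)}$ is a correct but insufficient ingredient --- does not go through.
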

\begin{proof}
	Suppose $Q\subset T_{r-1}M$ satisfies these properties.  Consider the direct sum decomposition
	\begin{equation}\label{eq:cotangentgrading}
	T^*(T_{r-1}M)|_M=\bigoplus_{j=0}^{r-1} (T^*(T_{r-1} M)|_M)^j
	\end{equation}
	where the $j$-th summand is spanned by $\d f^{(j)}|_M$ for $f\in C^\infty(M)$.  In local coordinates $x_a$ on $M$, 
	\[ \d f^{(j)}|_M=\sum_a \f{\p f}{\p x_a} \d x_a^{(j)}|_M;\]
	this shows that there are isomorphisms 
	\begin{equation}\label{eq:caniso}
	(T^*(T_{r-1} M)|_M)^j\to T^*M
	\end{equation}
	taking $\d f^{(j)}|_M$ to $\d f$. The the conormal bundle to $Q$ along $N=Q\cap M$, 
	\begin{equation}
	\on{ann}(TQ)|_N\subset T^*(T_{r-1}M)|_M
	\end{equation}
	is spanned by differentials $\d g|_N$ for functions $g\in C^\infty(T_{r-1}M)$ such that $g|_Q=0$. 
	Condition \eqref{it:ta} shows that it is already spanned by differentials of  functions of the form $f^{(j)}$; hence 
	$\on{ann}(TQ)|_N$ is a graded subbundle:
	\[ \on{ann}(TQ)|_N=\bigoplus_{j=0}^{r-1} (\on{ann}(TQ)|_N)^j.\]
	Let $(T^*M|_N)_{(j+1)}$ be the image of its $j$-th summand under \eqref{eq:caniso}, and put $(T^*M|_N)_{(0)}=
	T^*M|_N$. Condition \eqref{it:tb} shows that this defines a filtration \eqref{eq:filtrationcotangent} of $T^*M|_N$. 
	Let 
	\[ k_i=n-\on{rank}(T^*M|_N)_{(i)}\]
	so that $\dim N= k_0\le \cdots \le k_r=n=\dim M$. To show that the filtration 
	\eqref{eq:iweight} gives a weighting, we construct weighted coordinates 
	near any given $p\in N\subset M$. 
	Choose functions  
	\[ x_a,\ a>k_{r-1},\] 
	defined near $p$, such that the lifts $x_a^{(r-1)}$ vanish on $Q$ and such that 
	\[ \d x_a^{(r-1)}|_p,\ a>k_{r-1}\] 
	are a basis of  $(\on{ann}(TQ)|_p)^{r-1}$. By \eqref{it:tb} the functions 
	$x_a^{(r-2)}$ vanish on $Q$ as well, and  $\d x_a^{(r-2)}|_p,\ a>k_{r-1}$ are linearly independent. 
	Extend to a collection of functions 
	\[ x_a,\ a>k_{r-2}\] such that  $\d   x_a^{(r-2)}|_N,\ a>k_{r-2}$ are a basis of 
	$(\on{ann}(TQ)|_p)^{r-2}$. Proceeding in this manner, we eventually obtain a collection of functions $x_a$, for all $a>k_0$, such that for all $j$, the lifts 
	$\d   x_a^{(j)},\ a>k_{j-1}$ are  a basis of $(\on{ann}(TQ)|_p)^{j}$. The differentials $\d x_a|_p,\ a>k_0$ are linearly independent, and we may choose  additional functions $x_a,\ a\le k_0$ so that the set of all $\d x_a|_p$ is a basis of  $T^*M|_p$. These functions define coordinates on a 
	sufficiently small open neighborhood 
	$U$ of $p$, and the submanifold of $T_{r-1}M|_U$ corresponding to the weighting is defined by the vanishing of $x_a^{(j)}$ 
	for $a>k_{j-1}$. But this is exactly $Q\cap T_{r-1}M|_U$. 
\end{proof}

\bibliographystyle{amsplain} 

\def\cprime{$'$} \def\polhk#1{\setbox0=\hbox{#1}{\ooalign{\hidewidth
			\lower1.5ex\hbox{`}\hidewidth\crcr\unhbox0}}} \def\cprime{$'$}
\def\cprime{$'$} \def\cprime{$'$} \def\cprime{$'$} \def\cprime{$'$}
\def\polhk#1{\setbox0=\hbox{#1}{\ooalign{\hidewidth
			\lower1.5ex\hbox{`}\hidewidth\crcr\unhbox0}}} \def\cprime{$'$}
\def\cprime{$'$} \def\cprime{$'$} \def\cprime{$'$} \def\cprime{$'$}
\providecommand{\bysame}{\leavevmode\hbox to3em{\hrulefill}\thinspace}
\providecommand{\MR}{\relax\ifhmode\unskip\space\fi MR }
\providecommand{\MRhref}[2]{%
	\href{http://www.ams.org/mathscinet-getitem?mr=#1}{#2}
}
\providecommand{\href}[2]{#2}

\end{document}